\newtheorem{definition}{Definition}
\newtheorem{theorem}{Theorem}
\newtheorem{lemma}{Lemma}
\newenvironment{proof}[1][Proof]{\textbf{#1.} }{\ \rule{0.5em}{0.5em}}
\long\def\symbolfootnote[#1]#2{\begingroup%
	\def\thefootnote{$\;$}\footnote[#1]{$^*$#2}\endgroup}
\begin{document}
	
	\title{Addendum to On Kuratowski partitions in the Marczewski and Laver structures and Ellentuck topology}
	\author{ Joanna Jureczko\footnote{The author is partially supported by Wroc\l{}aw Univercity of Science and Technology grant of K34W04D03 no. 8201003902.}}
	
		\maketitle
	
	\symbolfootnote[2]{Mathematics Subject Classification (2010): 54C30, 03E05, 03E40, 28A20.  
		
		\hspace{0.2cm}
		Keywords: \textsl{Kuratowski partition, point-finite family, Sacks forcing, Laver forcing, Ellentuck topology, fusion lemma.}}
	
	\begin{abstract} In this paper we present the generalizations of results, given in the paper published in Georgian J. Math. 26(2019) no. 4, pp 591-598, towards point-finite families. 
	\end{abstract}
	
	\section{Introduction}
	The ain of this paper is to continue investigations given in \cite{FJW} concerning the old problem posed by Kuratowski, \cite{KK1}, towards point-finite families. The results presented here can be important for further investigations in this direction, because the topic around Kuratowski problem is still alive. The point-finite version of our previous results is motivated by  \cite{BCGR}.
	
	From the formal point of view the main results presented here are more general than those given in \cite{FJW}, since a partition is a special case of a point-finite cover. From the technical point of view, the proofs in both results are very similar, because they are also based on Fusion Lemma, (see \cite{AT, JB, TJ1}) which cannot be omitted in proofs, but there are some nuances, thus we show here them in proofs.
	
	The paper is organized as follows. In Section 2 there are given definitions and previous results, in Section 3 there are given main results.
	
	We use the standard terminology for the field. The definitions and facts not cited here  one can be find in \cite{RE, KK} (topology), \cite{EL} (Ellentuck topology), \cite{TJ} (set theory), \cite{JB, TJ1, JMS} (forcing).

		\section{Definitions and previous results}
	
	\subsection{Tree ideals}
	
	Let $K\subseteq \omega$ be a set, (finite or infinite).
	A set $T\subseteq K^{<\omega}$ is called a \textit{tree} iff $t\upharpoonright n \in T$ for all $t \in T$ and $n\in \omega$.
	Let $\mathbb{T}$ means a family of all tress. 
	For each $T \in \mathbb{T}$ and $t \in T$ the set 
	$$split(t, T) = |\{n \in K \colon t^\smallfrown n \in T\}|$$ denotes the number of successors of nodes in $T$.

	\begin{definition}
		A tree $T$ is called 
		\begin{enumerate}
			\item Sacks or perfect tree, denoted $T \in \mathbb{S}$, iff $K=\{0,1\}$ and $split(t, T) = 2$ for each $t \in T$,
			\item Laver tree, denoted $T \in \mathbb{L}$, iff $K=\omega$ and $split(t, T)$ is infinite for each $t \in T$.  
		\end{enumerate}
	\end{definition}
	
	Let $$[T] = \{x \in K^\omega \colon \forall_{n \in \omega}\ x \upharpoonright n \in T \}$$
	be the set of all infinite paths through $T$.
	\\
	Notice that $[T]$ is closed in the Baire space $K^\omega$, (see e.g. \cite{TJ}).
	
	By $stem(T)$ we mean a node $t \in T$ such that $split(s, T) = 1$ and $split(t, T) >~1$ for any $s \varsubsetneq t$.
	
	The ordering on $\mathbb{S}$ is defined as follows
	$Q\leqslant T $ iff $ Q \subseteq T$
	and 
	$$Q \leqslant_n T \textrm{ iff } Q \leqslant T \textrm{ and  any node of $n$-level of $T$ is a node of $n$-level of $Q$}.$$
	\indent
	If $T \in \mathbb{L}$, then $\{x \in [T] \colon stem(T) \in x\}$, (i.e. the part of $T$ above the $stem(T)$),  can be enumerated as follows:
	$s^T_0 = stem(T), s^T_1, ..., s^T_n, ...$\ .
	Thus we can define the ordering on $\mathbb{L}$ in the following way: let $Q, T \in \mathbb{L}$, $Q \leqslant T$ iff $Q \subseteq T$
	and
	$$Q \leqslant_n T \textrm{ iff } stem(Q) = stem(T) \textrm{ and } s^Q_i = s^T_i \textrm{ for all } i = 0, ..., n.$$

	We say that a set $A \subseteq K^\omega$ is a \textit{$t$-set} iff 
	$$\forall_{T \in \mathbb{T}}\ \exists_{Q \in \mathbb{T}}\ Q \subseteq T \wedge ([Q] \subseteq A \vee [Q]\cap A =\emptyset). $$
	We say that a set $A \subseteq 2^\omega$ is a \textit{$(t^0)$-set} iff 
	$$\forall_{T \in \mathbb{T}}\ \exists_{Q \in \mathbb{T}}\ Q \subseteq T \wedge  [Q]\cap A =\emptyset. $$
	For $\mathbb{S}$ we use the notation $(s)$- and $(s^0)$-sets for $(t)$- and $(t^0)$-sets, respectively, but for  $\mathbb{L}$ we use the notation $(l)$- and $(l^0)$-sets for $(t)$- and $(t^0)$-sets, respectively.
	
	Notice that all $(s^0)$-sets ($(l^0)$-sets) form a $\sigma$-ideal in $\mathbb{S}$ (in $\mathbb{L}$)  which we denote by $\mathbb{S}^0$ (by $\mathbb{L}^0$). 
	For further considerations, unless otherwise stated, $\mathbb{T}$ and $\mathbb{T}^0$ mean $\sigma$-ideals of: Sacks trees, (i.e. $(s)$- and $(s_0)$-tree, respectively) and Laver tree (i.e. $(l)$- and $(l^0)$-tree,respectively). Then $t, t^0$ and $K$ will be determined accordingly to theses structures.

	\subsection{Ellentuck topology}
	
	The Ellentuck topology $[\omega]^{\omega}_{EL}$ on $[\omega]^\omega$ is generated by sets of the form
	$$[a, A] = \{B \in [A]^\omega \colon a \subset B \subseteq a \cup A\},$$
	where $a \in [\omega]^{<\omega}$ and $A \in [\omega]^\omega$. We call such sets \textit{Ellentuck sets, (shortly $EL$-sets).} Obviously $[a, A] \subseteq [b, B]$ iff $b \subseteq a$ and $A \subseteq B$. 
	
	A set $M \subseteq [\omega]^\omega$ is \textit{completely Ramsey}, (shortly \textit{$CR$-set}), if for every $[a, A]$ there exists $B \in [A]^\omega$ such that $[a, B] \subseteq M$ or $[a, B] \cap M = \emptyset.$ 
	A set $M \subseteq [\omega]^\omega$ is \textit{nowhere Ramsey}, (shortly \textit{$NR$-set}), if for every $[a, A]$ there exists $B \in [A]^\omega$ such that $[a, B] \cap M = \emptyset.$
	\\
	Notice that all $NR$-sets form a $\sigma$-ideal in $[\omega]^{\omega}_{EL}$ which we denote by $\mathbb{NR}$. 
	
		\subsection{Fusion Lemma}
	Let $\mathbb{T}$  be the family of all trees.  
	A sequence $\{T_n\}_{n \in \omega}$ of trees such that 
	$$T_0 \geqslant_0 T_1 \geqslant_1 ... \geqslant_{n-1} T_n \geqslant_n ...$$
	is called a \textit{fusion sequence}.
	\\
	\\
	\textbf{Fact 1 (\cite{TJ1})} If $\{T_n\}_{n\in \omega}$ is a fusion sequence then $T = \bigcap_{n\in \omega}T_n$, (the fusion of $\{T_n\}_{n \in \omega}$), belongs to $\mathbb{T}$.
	\\ 
	\\
	A sequence $\{[a_n, A_n]\}_{n \in \omega}$ of $EL$-sets  is called a \textit{fusion sequence} if it is infinite and
	\\(1) $\{a_n\}_{n \in \omega}$ is a nondecreasing sequence of integers converging to infinity,
	\\(2) $A_{n+1} \in [a_n, A_n]$ for all $n \in \omega$.
	\\
	\\
	\textbf{Fact 2 (\cite{TJ1})} If $\{[a_n, A_n]\}_{n \in \omega}$ is a fusion sequence then $$[a, A] = \bigcap_{n\in \omega}[a_, A_n] =  [\bigcap_{n\in \omega} a_n,\bigcap_{n\in \omega}  A_n],$$ (the fusion of $\{[a_n, A_n]\}_{n \in \omega})$, is an $EL$-set.
	
	\section{Main results}
	
	In this part we present the main results of this paper. Recall that a family $\mathcal{F}$ of subsets of a topological space $X$ is called \textit{a point-finite family} if for $x \in X$ the family $\{A \in \mathcal{F} \colon x \in A\}$ is finite.
	
	We start with auxiliary lemmas which proofs are very similar to proofs of \cite[Lemmas 4.1-4.3]{FJW}. Thus we only indicate the differences.
	
		\begin{lemma}
			\begin{enumerate}
		\item Let $A \in P(K^\omega) \setminus \mathbb{T}^0$. For any  point-finite cover $\mathcal{F}$ of $A$ consisiting of  $t^0$-sets and for any perfect tree $T\in \mathbb{S}$ there exists a perfect subtree $Q \leqslant T$  such that the family
		$$\mathcal{F}_{[Q]} = \{F \cap [Q] \colon F_\alpha \in \mathcal{F}\}$$ has cardinality continuum.
		\item 	Let $M \in P([\omega]^\omega)\setminus \mathbb{NR}$ be an open and dense set. For any point-finite cover $\mathcal{F}$ of $M$ consisting of $NR$-sets and for any $[a, A] \subseteq [\omega]^{\omega}_{EL} $ there exists $[b, B] \subseteq [a, A]$ such that the family $$\mathcal{F}_{[b, B]} = \{F \cap [b, B] \colon F \in \mathcal{F}\}$$ has cardinality continuum.
		\end{enumerate}
	\end{lemma}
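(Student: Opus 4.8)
The plan is to prove both parts by one and the same fusion argument --- carried out with trees, the orderings $\leqslant_n$ and Fact~1 in case (1), and with $EL$-sets, the orderings $\leqslant_n$ on them and Fact~2 in case (2) --- following the proof of \cite[Lemmas 4.1--4.3]{FJW}, so I only describe the steps and point out where point-finiteness replaces the ``one cell through each point'' feature of a partition.

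First I would pin down the starting data so that the refinement can begin. For (1) I use that $[T]\cap A\notin\mathbb{T}^0$; then in fact $[R]\cap A\notin\mathbb{T}^0$ for every subtree $R\leqslant T$, because if $[R]\cap A$ were a $t^0$-set then, applying the definition of $t^0$-set to the tree $R$ itself, we would obtain $R'\leqslant R$ with $[R']\cap A=\emptyset$, which is impossible; in particular $A\cap[R]$ is dense in $[R]$. For (2) the hypothesis that $M$ is open and dense plays exactly this role: for every $[a',A']\subseteq[a,A]$ the set $M\cap[a',A']$ is open and dense in $[a',A']$, hence $\notin\mathbb{NR}$.

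The heart of the matter, and the only genuine departure from the partition case, is a \emph{refreshing step}. Let $R\leqslant T$ with $[R]\cap A\notin\mathbb{T}^0$ and let $\mathcal{G}\subseteq\mathcal{F}$ be a countable subfamily (the cells already committed). Since $\mathbb{T}^0$ is a $\sigma$-ideal, $\bigcup\mathcal{G}\in\mathbb{T}^0$, hence $([R]\cap A)\setminus\bigcup\mathcal{G}\notin\mathbb{T}^0$; choose a point $x$ in it. As $\mathcal{F}$ covers $A$, the family $\{F\in\mathcal{F}:x\in F\}$ is nonempty; as $\mathcal{F}$ is point-finite it is finite; and as $x\notin\bigcup\mathcal{G}$, none of its members lies in $\mathcal{G}$. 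So one may pick a genuinely new cell $F\in\mathcal{F}\setminus\mathcal{G}$ with $x\in F\cap[R]$. (For a partition one takes the unique cell through $x$; the point here is that after discarding a $t^0$-set's worth of old cells, a still-uncovered point must lie in some unused cell --- point-finiteness is precisely what makes the finitely many cells through $x$ harmless.) I also use the routine splitting fact: if $[R]\cap A\notin\mathbb{T}^0$ then there is $R'\leqslant R$, with stem as long as one wishes, whose first splitting node $\rho$ has $[R'_i]\cap A\notin\mathbb{T}^0$ for every immediate-successor direction $i$ of $\rho$ --- otherwise, routing the (countably many) discarded small pieces along a single branch would display $[R]\cap A$ as a $t^0$-set.

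Now I would run the fusion. Build $T=T_0\geqslant_0 T_1\geqslant_1\cdots$ together with, indexed by a binary branching scheme for $\mathbb{S}$ (and the prescribed $\omega$-branching scheme for $\mathbb{L}$), nodes $\tau_\eta$, witness points $x_\eta\in A$ and cells $F_\eta\in\mathcal{F}$ with $x_\eta\in F_\eta$, so that at each stage every current node is split into pieces all of which stay $t^0$-large over $A$ (splitting fact), into each new piece a fresh witness point and a fresh cell are routed by the refreshing step applied with $\mathcal{G}$ the finite set of cells used so far --- the $F_\eta$ being then pairwise distinct --- and each $x_\eta$ is kept as a branch of all later trees. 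By Fact~1 the fusion $Q=\bigcap_n T_n$ is again a perfect tree (in case (2), by Fact~2, $[b,B]=\bigcap_n[a_n,A_n]$ is an $EL$-set), each $x_\eta$ lies in $[Q]\cap A$ (resp. in $[b,B]\cap M$), so $F_\eta\cap[Q]\neq\emptyset$, and, since each $x_\eta$ was chosen outside every previously committed cell, distinct $\eta$ yield distinct traces $F_\eta\cap[Q]$. The remaining, and main, obstacle is to arrange that the number of distinct traces reaches the continuum rather than merely $\aleph_1$: one has to distribute the refreshing along the branching scheme so that a perfect set of branches of $[Q]$ carry pairwise distinct surviving cells, keeping cells attached to incomparable nodes separated by the corresponding subtrees --- this is the ``nuance'' the author refers to, and it is handled exactly as in \cite{FJW}. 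Part (2) is then word-for-word the same with $[a,A]$, $\mathbb{NR}$, the orderings $\leqslant_n$ on $EL$-sets and Fact~2 in place of $T$, $\mathbb{T}^0$, the orderings on trees and Fact~1, the only additional remark being that openness and density of $M$ are what keep $M\cap[a_n,A_n]\notin\mathbb{NR}$ throughout the construction.
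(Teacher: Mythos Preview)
Your fusion-with-refreshing approach differs essentially from the paper's argument, and as written it does not reach the conclusion. The paper (following \cite{FJW}) does not select one cell $F_\eta$ per node of the branching scheme; instead it splits the entire cover into a tree of pairwise disjoint subfamilies $\mathcal{F}_h$ ($h\in k^{n}$), while simultaneously building subtrees $T_h\leqslant_n T_g$ (for $g\subseteq h$) with $A\cap[T_h]\subseteq\bigcup\mathcal{F}_h$ and $A\cap[T_h]\notin\mathbb{T}^0$. After fusion $Q=\bigcap_n\bigcup_{h\in k^n}T_h$, every branch $f\in k^\omega$ singles out a portion of $[Q]\cap A$ contained in $\bigcup\mathcal{F}_{f\upharpoonright n}$ for all $n$; point-finiteness then lets one pick, through any point of that portion, a cell $F_f\in\bigcap_n\mathcal{F}_{f\upharpoonright n}$, and disjointness of the $\mathcal{F}_h$'s at each level forces $F_f\neq F_{f'}$ whenever $f\neq f'$ --- continuum many distinct cells meeting $[Q]$.

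Your scheme instead selects one fresh cell per node of $k^{<\omega}$, hence only countably many cells in total (not $\aleph_1$, as you write --- the fusion is an $\omega$-length construction). You rightly flag this as the main obstacle, but the fix you point to, ``handled exactly as in \cite{FJW}'', is precisely the family-splitting construction above, which is not a refinement of your cell-by-cell refreshing but a different organisation of the induction. In particular, the ``nuance'' the author mentions is the adaptation of the splitting step $\mathcal{F}_h=\mathcal{F}_{h^\smallfrown 0}\sqcup\mathcal{F}_{h^\smallfrown 1}$ from partitions to point-finite covers (where disjoint subfamilies need not have disjoint unions), not a device for inflating countably many chosen cells to continuum many. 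To close the gap you should replace the per-node choice of a single $F_\eta$ by the per-node splitting of $\mathcal{F}_h$ into disjoint pieces with non-$t^0$ unions, and carry the condition $A\cap[T_h]\subseteq\bigcup\mathcal{F}_h$ through the fusion.
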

	
	\begin{proof}
			For our convenience we will show the proof of the first part of lemma. The second part is similar. 
		
		Let $\mathcal{F}$ be a point-finite cover of $A \in P(K^\omega) \setminus \mathbb{S}^0$ consisting of $t^0$-sets and
		let $T \in \mathbb{T}$. We will construct inductively by $n \in \omega$ a collection of subfamilies $\{\mathcal{F}_h \colon h\in k^n\}$, ($k \in \omega$, $k=2$ for Sacks trees and $k=n$ for Laver trees, compare \cite[Lemma 4.1-4.2]{FJW}) of $\mathcal{F}$ and a collection of perfect subtrees $\{T_h \colon h \in k^n\}$ of $T$ with the following properties:
		for any distinct $h, h' \in k^n$ 
		\begin{itemize}
	
	\item[(i)] $\mathcal{F}_h \subseteq \mathcal{F}$ and $ T_h \leqslant T$;
		
	\item[(ii)] $\bigcup\{\mathcal{F}_{h} \colon h \in k^n \} = \bigcup \mathcal{F}$;
		
	\item[(iii)] $\bigcup\mathcal{F}_h \not \in \mathbb{T}^0$;
		
	\item[(iv)] $\mathcal{F}_{h} \subseteq \mathcal{F}_{g}$ and $ T_h \leqslant_n T_{g} $, i.e. $[T_h] \subseteq [T_{g}]$, for $h\cap g = g$;
		
	\item[(v)] $\mathcal{F}_{h} \cap \mathcal{F}_{h'} = \emptyset$ and $[T_h]\cap [T_{h'}] =\emptyset$;
		
	\item[(vi)] $A \cap [T_h] \subseteq \mathcal{F}_h$ and $A\cap [T_h] \not \in \mathbb{T}^0$.
	\end{itemize}	
		
		Assume that for some $m \in \omega$ we have constructed the families $\{\mathcal{F}_h \colon h \in k^m \}$  and $\{T_h \colon h \in k^m\}$ of properties (i) - (vi).
		
		Now fix $h \in k^m$ and split $\mathcal{F}_h$ into disjoint  families $\mathcal{F}_{h^{\smallfrown} 0}, \mathcal{F}_{h^{\smallfrown} 1} = \mathcal{F} \setminus \mathcal{F}_{h^{\smallfrown} 0}$ such that
		$\bigcup\mathcal{F}_{h^{\smallfrown} 0},$ $ \bigcup\mathcal{F}_{h^{\smallfrown} 1} \not \in \mathbb{T}^0$.
		(Such splitting is possible because of (iii)).
		Now we will construct $T_{h^{\smallfrown} 0}, T_{h^{\smallfrown} 1} \leqslant_m T_h$ of properties (iv) and (vi). The inductive step flows as is done in proofs of \cite[Lemma 4.1-4.2]{FJW}.
		
		Now take $Q = \bigcap_{n \in \omega} \bigcup_{h \in 2^n} T_h$.
		By Fact 1 the tree  $Q\in \mathbb{T}$. Then $\mathcal{F}_{[Q]}$ has the required property. 
	\end{proof}

	\begin{theorem}
		
		\begin{enumerate}
			\item 
			Let $A \in P(K^\omega)\setminus \mathbb{T}^0$ and let $\mathcal{F}$ be a point-finite cover of $A$ consisting of $t^0$-sets. If for each $T \in \mathbb{T}$, with $A \cap [T] \not = \emptyset$, there exists a subtree $Q \leqslant T$, (with $A \cap [Q] \not = \emptyset$),  such that $\mathcal{F}_{[Q]} = \{F \cap [Q] \colon F \in \mathcal{F}\}$ has cardinality continuum then 	$\bigcup \mathcal{F'}_{[Q]}$ is not a $t$-set for some subfamily $\mathcal{F}' \subseteq \mathcal{F}$.
			\item 	Let $M \in P([\omega]^{\omega})\setminus \mathbb{NR}$ and let $\mathcal{F}$ be a point-finite cover of $A$ consisting of $NR$-sets. If for each $EL$-set $[a, A]$, with $M \cap [a, A] \not = \emptyset$, there exists an $EL$-set $[b, B] \leqslant [a, A]$, (with $M \cap [b, B] \not = \emptyset$),  such that $\mathcal{F}_{[b, B]} = \{F \cap [b, B] \colon F \in \mathcal{F}\}$ has cardinality continuum then 	$\bigcup \mathcal{F'}_{[b, B]}$ is not a $CR$-set for some subfamily $\mathcal{F}' \subseteq \mathcal{F}$.
		\end{enumerate}
	\end{theorem}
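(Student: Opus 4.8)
The plan is to prove part (1) in detail; part (2) is the same argument read through the dictionary: $EL$-sets for trees, the order on $EL$-sets for the order on $\mathbb{S}$ (or $\mathbb{L}$), the $\sigma$-ideal $\mathbb{NR}$ for $\mathbb{T}^{0}$, ``$CR$-set'' for ``$t$-set'', Lemma~1(2) for Lemma~1(1), and Fact~2 for Fact~1. (Note that by Lemma~1 the hypothesis of the theorem holds automatically, so the theorem in effect asserts the conclusion outright.) First I would fix an ambient tree: since $A\notin\mathbb{T}^{0}$ there is $T^{\ast}\in\mathbb{T}$ with $A\cap[R]\neq\emptyset$ for every subtree $R\leqslant T^{\ast}$, and applying the hypothesis to $T^{\ast}$ gives $Q\leqslant T^{\ast}$ with $A\cap[Q]\neq\emptyset$ and $|\mathcal{F}_{[Q]}|=\mathfrak{c}$; note every $R\leqslant Q$ still meets $A$ and still lies below $T^{\ast}$. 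Writing $Z(\mathcal{G})=\bigcup\{F\cap[Q]\colon F\in\mathcal{G}\}$ for $\mathcal{G}\subseteq\mathcal{F}$, it suffices to construct $\mathcal{F}'\subseteq\mathcal{F}$ such that for every subtree $R\leqslant Q$ we have $[R]\cap Z(\mathcal{F}')\neq\emptyset$ and $[R]\not\subseteq Z(\mathcal{F}')$: then no subtree of $Q$ can witness that $Z(\mathcal{F}')$ is a $t$-set, so $\bigcup\mathcal{F}'_{[Q]}=Z(\mathcal{F}')$ is not a $t$-set.

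The construction is a transfinite recursion of length $\mathfrak{c}$. Since $K^{<\omega}$ is countable, $Q$ has at most $\mathfrak{c}$ subtrees lying in $\mathbb{T}$; list them in a sequence $\langle R_{\xi}\colon\xi<\mathfrak{c}\rangle$ (with repetitions if needed). I would build increasing chains of pairwise disjoint subfamilies $\mathrm{In}_{\xi},\mathrm{Out}_{\xi}\subseteq\mathcal{F}$, each of cardinality $<\mathfrak{c}$, with $\mathrm{In}_{0}=\mathrm{Out}_{0}=\emptyset$ and unions taken at limit stages. At stage $\xi$, apply the hypothesis (equivalently Lemma~1(1)) below $R_{\xi}$ to pass to a subtree $R'_{\xi}\leqslant R_{\xi}$ with $A\cap[R'_{\xi}]\neq\emptyset$ and $|\mathcal{F}_{[R'_{\xi}]}|=\mathfrak{c}$; then (i) choose $p_{\xi}\in A\cap[R'_{\xi}]$ with $p_{\xi}\notin\bigcup\mathrm{Out}_{\xi}$, pick some $F^{p}\in\mathcal{F}$ with $p_{\xi}\in F^{p}$, and put $F^{p}$ into $\mathrm{In}$ (permissible since $F^{p}\notin\mathrm{Out}_{\xi}$); and (ii) choose $q_{\xi}\in A\cap[R'_{\xi}]$ lying in no member of the just-enlarged $\mathrm{In}$-family, and — using point-finiteness — put the finitely many members of $\mathcal{F}$ containing $q_{\xi}$ into $\mathrm{Out}$. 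Let $\mathcal{F}'=\bigcup_{\xi<\mathfrak{c}}\mathrm{In}_{\xi}$. For each $\xi$ we then have $p_{\xi}\in F^{p}\cap[Q]\subseteq Z(\mathcal{F}')$ with $p_{\xi}\in[R_{\xi}]$, and $q_{\xi}\in[R_{\xi}]$ with $q_{\xi}\notin Z(\mathcal{F}')$, since every member of $\mathcal{F}$ through $q_{\xi}$ was placed in $\mathrm{Out}$, which stays disjoint from $\mathcal{F}'$. Hence $Z(\mathcal{F}')$ is not a $t$-set.

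The step I expect to be the main obstacle is guaranteeing, at each stage, that the points $p_{\xi}$ and $q_{\xi}$ exist, i.e.\ that $A\cap[R'_{\xi}]$ is not absorbed by the fewer-than-$\mathfrak{c}$ members of $\mathcal{F}$ already committed to $\mathrm{In}$ or $\mathrm{Out}$. This is precisely where the continuum-many-traces hypothesis must be combined with point-finiteness, and where the bare cardinality $|\mathcal{F}_{[R'_{\xi}]}|=\mathfrak{c}$ is too weak to quote directly (unlike in the partition setting of \cite{FJW}): one should instead re-run the inductive construction behind Lemma~1(1) below $R_{\xi}$ so as to produce $R'_{\xi}$ together with $\mathfrak{c}$ distinct $F\in\mathcal{F}$ with $A\cap[R'_{\xi}]\cap F\notin\mathbb{T}^{0}$, arranged so that no fewer than $\mathfrak{c}$ of them cover $A\cap[R'_{\xi}]$ — point-finiteness being exactly what makes such an arrangement possible, since discarding one point of $A\cap[R'_{\xi}]$ removes only finitely many members of $\mathcal{F}$ from consideration. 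With this strengthening the recursion runs without conflict between $\mathrm{In}$ and $\mathrm{Out}$, and part (2) follows by the translation above, with $\mathbb{NR}$-smallness in place of $\mathbb{T}^{0}$-smallness and Fact~2 in place of Fact~1.
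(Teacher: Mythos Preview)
Your argument is essentially the paper's: enumerate the relevant (sub)trees in order type $\mathfrak{c}$ and run a transfinite recursion that, at each stage, uses the continuum-trace hypothesis together with point-finiteness to select two witnesses and feed them into two disjoint subfamilies of $\mathcal{F}$ (your $\mathrm{In}/\mathrm{Out}$ correspond to the paper's $\mathcal{B}^{0}/\mathcal{B}^{1}$), so that in the end every subtree both meets and fails to be contained in the union of the chosen subfamily. The paper differs from you only at the point you yourself flag as the obstacle: rather than asking that $x_\alpha,y_\alpha$ avoid the \emph{union} of previously committed sets, the paper merely asks that \emph{some} $F\in\mathcal{F}$ through the point be new, and this is immediate from $|\mathcal{F}_{[Q_\alpha]}|=\mathfrak{c}$ together with the fact that each $\mathcal{B}^{i}_{\beta}$ is finite (by point-finiteness) so that fewer than $\mathfrak{c}$ sets have been used---this is exactly the ``$\mathfrak{c}$ available versus $<\mathfrak{c}$ used'' count you were looking for, and it makes your proposed detour through a strengthened Lemma~1 unnecessary.
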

	
	\begin{proof}
		For our convenience we will show the proof of the first part of theorem. The second part is similar.
		
		Enumerate 
		$\mathbb{T}' = \{T_\alpha \in \mathbb{T} \colon A\cap [T_\alpha] \not = \emptyset, \alpha \in 2^\omega\}$.
		Let $\mathcal{F}$ be a  point-finite cover of $A$ consisting of $t^0$-sets. By assumption, for each  tree $T_\alpha \in \mathbb{T}'$ there exists a subtree $Q_\alpha \leqslant T_\alpha$ such that the point-finite $\mathcal{F}_{[Q_\alpha]} = \{F \cap [Q_\alpha] \colon F \in \mathcal{F}\}$ has cardinality continuum.
		Hence, for each $\alpha \in 2^\omega$ we will choose distinct elements $x_\alpha, y_\alpha \in [Q_\alpha]$,  such that the families
		$$\mathcal{B}^0_\alpha = \{F \in \mathcal{F} \colon x_\alpha \in F \cap [Q_\alpha]\} \setminus \{F \in \mathcal{F} \colon F \in \{\mathcal{B}^0_\beta\cup \mathcal{B}^1_{\beta}\colon \beta < \alpha\}\}$$
			$$\mathcal{B}^1_\alpha = \{F \in \mathcal{F} \colon x_\alpha \in F \cap [Q_\alpha]\} \setminus \{F \in \mathcal{F} \colon F \in \{\mathcal{B}^0_\beta\cup \mathcal{B}^1_{\beta}\cup \mathcal{B}^0_\alpha\colon \beta < \alpha\}\}$$
		are non-empty.
		(Such choice is possible because  $\mathcal{F}_{[Q_\alpha]}$ has cardinality continuum, but each $\mathcal{F}^0_\beta$ and $\mathcal{F}^1_\beta$ are finite and $\beta<\alpha$).
		Notice that $\mathcal{B}^0_\alpha$ and $\mathcal{B}^1_\alpha$ are disjoint. 
		
		Now, let $\mathcal{B}^0 = \{\mathcal{B}^0_\alpha \colon \alpha < 2^\omega\}$ and $\mathcal{B}^1 = \{\mathcal{B}^1_\alpha \colon \alpha < 2^\omega\}$ are disjoint. 
			
		Notice that $\bigcup \mathcal{B}^{\varepsilon}$ are not $t$-sets for any $\varepsilon \in \{0, 1\}$.
		Indeed. Suppose that  $\bigcup \mathcal{B}^{\varepsilon}$ is a $t$-set for some $\varepsilon \in \{0, 1\}$. Then there exists $Q_\alpha \leqslant T_\alpha$ such that $[Q_\alpha] \cap \bigcup \mathcal{B}^{\varepsilon} = \emptyset$. But by the construction we have that 
		$\{F \in \mathcal{F} \colon F \cap [Q_\alpha] \cap \bigcup \mathcal{B}^{\varepsilon} \not = \emptyset\}$ is non-empty. A contradiction.
		
		If $\bigcup \mathcal{B}^{\varepsilon}$ is not a $t$-set for some $\varepsilon \in \{0, 1\}$, then there exists $Q_\alpha \leqslant T_\alpha$ such that $[Q_\alpha] \subseteq \bigcup \mathcal{B}^{\varepsilon}$ and by the construction $[Q_\alpha] \cap \bigcup \mathcal{B}^{1 - \varepsilon} \not = \emptyset$ which contradicts with disjointness of families $\mathcal{B}^{0}$ and  $\mathcal{B}^{1}$. 
	\end{proof}

		\noindent
	{\sc Joanna Jureczko}
	\\
	Wroc\l{}aw University of Science and Technology,
	Faculty of Electronics, Wroc\l{}aw, Poland
	\\
	{\sl e-mail: joanna.jureczko@pwr.edu.pl}
\end{document}